\newtheorem{theorem}{Theorem}
\newtheorem{corollary}[theorem]{Corollary}
\newtheorem{lemma}[theorem]{Lemma}
\theoremstyle{definition}
\renewcommand{\phi}{\varphi}
\renewcommand{\P}{\mathcal{P}}
\newcommand{\s}{\subseteq}
\newcommand{\sm}{\setminus}
\newcommand{\lf}{\lfloor}
\newcommand{\rf}{\rfloor}
\title{$m$-Modular Wythoff}
\author{Tanya Khovanova \and Nelson Niu}
\date{}
\begin{document}

  \maketitle

  \begin{abstract}
    We introduce a variant of Wythoff's Game that we call $m$-Modular Wythoff's Game.
    In the original Wythoff's Game, players can take a positive number of tokens from one pile, or they can take a positive number of tokens from both piles if the number of tokens they take from the first pile is equal to the number of tokens they take from the second.
    In our variant, we weaken this equality condition to one of equivalence modulo $m$.
    We characterize the P-positions of our $m$-Modular variant as a finite subset of the P-positions of the known P-positions of the original Wythoff's Game.
  \end{abstract}

  \section{Introduction}

  The Game of Nim forms the foundation of the mathematical study of two-player impartial games.
  In his landmark paper \cite{Nim}, Bouton \textit{solved} the Game of Nim by providing a winning strategy when one exists and proving when one did not, effectively founding the field of Combinatorial Game Theory.

  One of the most famous variants of Nim is Wythoff's Game, introduced and solved by Wythoff in \cite{Wy}.
  Both Nim and Wythoff's Game are examples of \textit{invariant games}, defined in \cite{Du}: games whose set of valid moves is independent of the position from which the moves are played.

  Several further variants of Nim and Wythoff have been studied, but few have moves based on modular congruence.
  One such example is Modular Nim, studied in \cite{Fr}.
  The first author recently explored a different modular variant of Nim in \cite{KS}.
  Dubbed $m$-Modular Nim, it is an invariant game in which moves predicated on modular congruence are added to the traditional Nim moves.

  In this paper, we introduce and solve an analogous invariant modular extension of Wythoff's Game.
  Here we mean \textit{extension} in the sense of \cite{ext}: we expand the set of moves available in Wythoff's Game without removing any.
  In \cite{ext}, Duchêne et al.\ examine extensions of Wythoff's Game that preserve the P-positions of the original game.
  In contrast, we will show that our extension---which we call $m$-Modular Wythoff's Game---restricts the P-positions to a finite subset of those of the original Wythoff's Game.

  We begin in \Cref{sec:WG} by reviewing the rules and P-positions of Wythoff's Game and introducing notation.
  Then in \Cref{sec:mWG}, we define $m$-Modular Wythoff's Game, where in addition to the usual Wythoff moves, we allow players to remove tokens from both piles as long as the number of tokens removed from one pile is equivalent modulo $m$ to the number of tokens removed from the other.
  We proceed to compute the P-positions of $m$-Modular Wythoff for small values of $m$, showing that these positions are a subset of the P-positions of Wythoff's Game.

  In \Cref{sec:main}, we prove that the number of P-positions of $m$-Modular Wythoff is always finite.
  We do so by explicitly characterizing the P-positions of $m$-Modular Wythoff: there are $2\lfloor m/\phi \rfloor + 1$ of them, where $\phi\coloneqq\frac{1+\sqrt{5}}{2}$ is the golden ratio, and they form a subset of the P-positions of Wythoff's Game.
  Finally, we suggest some further directions of research in \Cref{sec:fd}.

  \subsection*{Acknowledgments}
  The authors are grateful to the MIT-PRIMES program for supporting this research, as well as to several anonymous reviewers for their valued feedback.

  \section{Wythoff's Game}\label{sec:WG}

  Here we review the rules and winning strategies of Wythoff's Game from \cite{Wy}.

  \subsection{Rules of Wythoff's Game}

  Let us recall the rules of Wythoff's Game from \cite{Wy}.
  The game is played with two piles of tokens.
  Two players take turns making moves, and two types of moves are allowed:
  \begin{enumerate}[label=\Roman*.]
    \item A player may take any positive number of tokens from any one pile. (Note that these are the same moves available in two-pile Nim.)
    \item A player may take the same positive number of tokens from both piles.
  \end{enumerate}
  The loser is the player who cannot move, i.e.\ the player whose turn it is when both piles are empty.

  A position of Wythoff's Game (or any other two-pile game) can be denoted by ordered pairs of nonnegative integers $(p_1,p_2)$, indicating the number of tokens in each pile.
  We say that position $p=(p_1,p_2)$ \textit{dominates} position $q=(q_1,q_2)$, writing $p\succeq q$, if $p_1-q_1$ and $p_2-q_2$ are both nonnegative.
  Moreover, we say that $p$ \textit{strictly dominates} $q$, writing $p\succ q$, if $p$ dominates $q$ and $p \neq q$.

  \subsection{P-positions of Wythoff's Game} \label{sec:ppos-wyt}

  In any combinatorial game, a \textit{P-position} is a position from which the \textit{previous} player has a winning strategy.
  Note that if a player who cannot move loses, all terminal positions are P-positions.
  On the other hand, an \textit{N-position} is a position from which the \textit{next} player has a winning strategy.
  Any move from a P-position goes to an N-position; and from any N-position, there exists a move to a P-position.

  Every game we discuss eventually terminates; in such a game, the P-positions and N-positions partition all of the game's positions.
  Finding the winning strategies of these games then amounts to computing their P-positions.

  The set of P-positions of Wythoff's Game, which we denote by $\P$, is characterized in \cite{Wy}.
  Before we describe $\P$ explicitly, we introduce the following notation.

  Define a set $\P_i$ for each nonnegative integer $i$ recursively as follows:
  \begin{equation} \label{eq.p-i-def}
    \begin{split}
      \P_0 &\coloneqq \{(0, 0)\} \\
      \text{and} \qquad \P_i &\coloneqq \P_{i-1} \cup \{(c_i, c_i+i), (c_i+i, c_i)\},
    \end{split}
  \end{equation}
  where $c_i$ is the smallest positive integer not already part of any ordered pair in $\P_{i-1}$. So $c_1=1$, making
  \[
  \P_1 = \{(0, 0), (1, 2), (2, 1)\};
  \]
  then $c_2=3$, making
  \[
  \P_2 = \{(0, 0), (1, 2), (2, 1), (3, 5), (5, 3)\};
  \]
  and so on.

  In \cite{Wy}, Wythoff shows that the P-positions of his game are given by the union of the nested sequence $\P_0\s\P_1\s\P_2\s\cdots$; that is,
  \[
  \P = \bigcup_{i=0}^{\infty} \P_i.
  \]
  Furthermore, it turns out that these P-positions are closely related to the golden ratio $\phi\coloneqq\frac{1+\sqrt{5}}{2}$, the positive number satisfying $\phi^2=\phi+1$.
  In particular, the positions $(c_i,c_i+i)$ and $(c_i+i,c_i)$ comprising each $\P_i\sm\P_{i-1}$ are given by
  \begin{equation} \label{eq.ci}
    \begin{split}
      (c_i,c_i+i)&=(\lf i\phi\rf, \lf i\phi\rf + i)=(\lf i\phi\rf, \lf i\phi^2\rf) \\
      \text{and} \qquad (c_i+i,c_i)&=(\lf i\phi\rf + i, \lf i\phi\rf)=(\lf i\phi^2\rf, \lf i\phi\rf).
    \end{split}
  \end{equation}
  The sequence
  \[
  (c_i)_{i=1}^\infty=(\lf\phi\rf,\lf2\phi\rf,\lf3\phi\rf,\ldots)
  \]
  is called the \textit{lower Wythoff sequence}, consisting of the \textit{lower Wythoff numbers}, while the sequence
  \[
  (c_i+i)_{i=1}^\infty=(\lf\phi^2\rf,\lf2\phi^2\rf,\lf3\phi^2\rf,\ldots)
  \]
  is the \textit{upper Wythoff sequence}, consisting of the \textit{upper Wythoff numbers}.
  The P-positions of Wythoff's game thus consist of the terminal position $(0,0)$ along with pairs of corresponding lower and upper Wythoff numbers.

  \section{Introducing \texorpdfstring{$m$}{m}-Modular Wythoff}\label{sec:mWG}

  We now describe the rules of \textit{$m$-Modular Wythoff's Game}, our extension of Wythoff's Game, for a positive integer $m$.
  Like the original Wythoff, $m$-Modular Wythoff is played with two piles of tokens, denoted by an ordered pair of nonnegative integers $(p_1,p_2)$, indicating the number of tokens in each pile.
  Two players take turns making moves until no tokens remain---the terminal position is $(0,0)$ reached, and the player whose turn is next loses.

  Two types of moves are allowed:
  \begin{enumerate}[label=\Roman*.]
    \item A player may take any positive number of tokens from any one pile. (Again, these are the same moves available in two-pile Nim.)
    \item A player may take a positive number of tokens from both piles, given that the difference between the number of tokens taken from each pile is divisible by $m$.
  \end{enumerate}
  We call these \textit{Type I} and \textit{Type II} moves, respectively.
  Note that the moves in Wythoff's Game are a subset of the moves in our $m$-Modular variant: $m$-Modular Wythoff is an extension of the original Wythoff.

  \subsection{Example: 2-Modular Wythoff}\label{sec:ex2}

  As an initial example, we compute the P-positions of $m$-Modular Wythoff in the case of $m=2$ via a standard argument.
  We may reason as follows.
  \begin{enumerate}
    \item The terminal position $(0,0)$ is a P-position.
    \item Positions one move away from $(0,0)$ are N-positions.
    \begin{enumerate}[label=\Roman*.]
      \item The positions that are a Type I move away from $(0,0)$ are those in which exactly one of the coordinates is $0$; these are therefore N-positions.
      \item The positions that are a Type II move away from $(0,0)$ are those with two positive coordinates of the same parity; these are therefore N-positions as well.
    \end{enumerate}
    \item From the position $(1,2)$, Type I moves can lead to $(0,2),(1,1),$ or $(1,0)$, while the only Type II move leads to $(0,1)$.
    All four of these are N-positions, so $(1,2)$ is a P-position.
    By symmetry, $(2,1)$ is a P-position as well.
    \item It remains to consider positions with positive coordinates of opposite parity not equal to $(1,2)$ or $(2,1)$.
    \begin{enumerate}[label=\Roman*.]
      \item If either coordinate of such a position is equal to $1$, then the other coordinate must be $4$ or greater, so a Type I move will bring it to either $(1,2)$ or $(2,1)$.
      Hence the position is an N-position.
      \item Otherwise, both coordinates of such a position are greater than $1$, so the position strictly dominates $(1,2)$; moreover, the coordinates have opposite parity, so a Type II move will bring it to $(1,2)$.
      Hence the position is also an N-position.
    \end{enumerate}
  \end{enumerate}
  Therefore, the only P-positions of $2$-Modular Wythoff are $(0,0),(1,2),$ and $(2,1)$; all the rest are N-positions.
  Note that the set of these positions is exactly the set $\P_1$ defined in \eqref{eq.p-i-def}.

  \subsection{More examples: \texorpdfstring{$m$}{m}-Modular Wythoff for \texorpdfstring{$m\geq5$}{m ≥ 5}}

  Following our example from \Cref{sec:ex2}, we may manually compute the P-positions of $m$-Modular Wythoff for small values of $m$.
  The results are listed in \Cref{tbl:ppos}.
  Notice that every set of P-positions below is one of our sets $\P_i$ from \eqref{eq.p-i-def}: a finite subset of the P-positions of Wythoff's Game.

  \begin{table}[h!]
    \begin{center}
      \begin{tabular}{|c||l|}
        \hline
        $m$ & P-positions of $m$-Modular Wythoff's Game \\ \hline
        2 & $\P_1 = \{(0, 0), (1, 2), (2, 1)\}$ \\ \hline
        3 & $\P_1 = \{(0, 0), (1, 2), (2, 1)\}$ \\ \hline
        4 & $\P_2 = \{(0, 0), (1, 2), (2, 1), (3, 5), (5, 3)\}$ \\ \hline
        5 & $P_3 = \{(0, 0), (1, 2), (2, 1), (3, 5), (5, 3), (4, 7), (7, 4)\}$\\
        \hline
      \end{tabular}
    \end{center}
    \label{tbl:ppos}
  \end{table}

  While the pattern may not be obvious from the rows in this table alone, we will show in the next section that the P-positions of $m$-Modular Wythoff are precisely the P-positions in Wythoff's Game for which the smaller pile has strictly fewer than $m$ tokens.

  \section{P-positions of \texorpdfstring{$m$}{m}-Modular Wythoff}\label{sec:main}

  We make the following observation about $m$-Modular Wythoff before we consider its P-positions.

  \begin{lemma} \label{lem.move}
    Given positions $(q_1,q_2)\succ(s_1,s_2)$ with $q_1-q_2 \equiv s_1-s_2 \pmod{m}$, there is always a move from $(q_1,q_2)$ to $(s_1,s_2)$.
  \end{lemma}

  \begin{proof}
    We have three cases: $q_1=s_1$ and $q_2>s_2$; $q_1>s_1$ and $q_2=s_2$; or $q_1>s_1$ and $q_2>s_2$.
    In either of the first two cases, there is a Type I move from $(q_1,q_2)$ to $(s_1,s_2)$.
    In the third case, rearranging the equivalence from the hypothesis yields $q_1-s_1\equiv q_2-s_2\pmod{m}$, so removing $q_1-s_1>0$ tokens from the first pile and $q_2-s_2>0$ tokens from the second pile is a valid Type II move from $(q_1,q_2)$ to $(s_1,s_2)$.
  \end{proof}

  To describe the P-positions of $m$-Modular Wythoff, it will be helpful to introduce the following piece of notation.
  For any positive integer $m$, let $a_m$ be the number of lower Wythoff numbers strictly less than $m$.
  Equivalently, we define $a_m$ to be the unique integer for which $\lfloor a_m\phi \rfloor < m \leq \lfloor (a_m+1)\phi \rfloor$. In other words, $a_m = \lfloor m/\phi \rfloor$. Then the sequence $a_1, a_2, a_3, \ldots$ is sequence A005206 in \cite{OEIS} (with indices shifted by one).
  For visualization, here is a table of the sequence of $a_m$'s, where bold indices $m$ are lower Wythoff numbers.
  Each $a_m$ is then equal to the number of bold indices to its left.

  \begin{table}[h!]
    \begin{center}
      \begin{tabular}{|c||c|c|c|c|c|c|c|c|c|c|c|c|c|c|c|c|c|c|c|c|}
        \hline
        $m$ & \textbf{1} & 2 & \textbf{3} & \textbf{4} & 5 & \textbf{6} & 7 & \textbf{8} & \textbf{9} & 10 & \textbf{11} & \textbf{12} & 13 & \textbf{14} & 15 & \textbf{16} \\ \hline
        $a_m$ & 0 & 1 & 1 & 2 & 3 & 3 & 4 & 4 & 5 & 6 & 6 & 7 & 8 & 8 & 9 & 9
        \\ \hline
      \end{tabular}
    \end{center}
    \label{tbl:a-m}
  \end{table}

  We claim that the P-positions of $m$-Modular Wythoff form the set $\P_{a_m}$, which by construction contains $2a_m + 1 = 2\lfloor m/\phi \rfloor + 1$ elements. To prove our claim, we will make use of several simple observations regarding the set of positions $\P_{a_m}$.
  Throughout the rest of this section, we make use of the fact that $\phi$ is irrational and satisfies $\phi^2=\phi+1$ and therefore $1/\phi=\phi-1$.

  \begin{lemma} \label{lem.ppos-ineq}
    Let $(p_1,p_2)\in\P_{a_m}$ with $p_1<p_2$.
    Then
    \begin{align}
      p_1 &< m, \label{eq.p1-m} \\
      p_2 &< m\phi, \label{eq.p2-mphi} \\
      \text{and} \qquad p_2-p_1 &< m/\phi = m(\phi-1). \label{eq.p2p1-m-over-phi}
    \end{align}
  \end{lemma}
  \begin{proof}
    By our construction of $\P_{a_m}$ and our discussion in \Cref{sec:ppos-wyt}, we know that $p_1$ (resp.\ $p_2$) must be one of the first $a_m$ lower (resp.\ upper) Wythoff numbers, so in particular $p_1\leq\lf a_m\phi\rf$ and $p_2\leq\lf a_m\phi^2\rf$.
    But by construction $\lf a_m\phi\rf<m$ and thus $\lf a_m\phi^2\rf<m\phi$, so \eqref{eq.p1-m} and \eqref{eq.p2-mphi} follow.

    Moreover, again by our construction of $\P_{a_m}$ we have that $p_2-p_1\leq a_m$, and again by our construction of $a_m$ we have $a_m=\lf m/\phi\rf$.
    As $\phi$ is irrational and satisfies $1/\phi=\phi-1$, we have $\lf m/\phi\rf<m/\phi=m(\phi-1)$, so \eqref{eq.p2p1-m-over-phi} follows.
  \end{proof}

  \begin{lemma} \label{lem.coord-exists}
    For any nonnegative integer $r<m$, there is a position in $\P_{a_m}$ with $r$ as a coordinate.
  \end{lemma}
  \begin{proof}
    If $r=0$, then $(0,0)\in\P_{a_m}$ and we are done.
    So assume $r>0$.
    By \eqref{eq.ci} with $i\coloneqq a_m+1$, we have that $c_{a_m+1}$, the smallest positive integer not already a part of any ordered pair in $\P_{a_m}$, is equal to $\lf (a_m+1)\phi\rf$.
    But by the construction of $a_m$, we have that $m\leq\lf(a_m+1)\phi\rf=c_{a_m+1}$, so in particular $r<c_{a_m+1}$.
    Hence, by the minimality of $c_{a_m+1}$, there exists a position in $\P_{a_m}$ with $r$ as a coordinate.
  \end{proof}

  \begin{lemma} \label{lem.diff-exists}
    For any positive integer $s < m/\phi$, there are exactly two positions in $\P_{a_m}$ whose absolute difference in coordinates is $s$, namely $(q_1,q_2)$ and $(q_2,q_1)$ in $\P_{a_m}$ with $q_1 = \lf s\phi\rf$ and $q_2 = \lf s\phi^2\rf$.
  \end{lemma}
  \begin{proof}
    As $\phi$ is irrational, $m/\phi$ is not an integer, but $s$ is; so $s\leq\lf m/\phi\rf$.
    Since $\lf m/\phi\rf=a_m$ by construction, it follows that $s\leq a_m$.
    So by \eqref{eq.p-i-def},
    \[
    (c_s,c_s+s),(c_s+s,c_s)\in\P_s\s\P_{a_m},
    \]
    with
    \[
    c_s=\lf s\phi\rf \qquad\text{and}\qquad c_s+s=\lf s\phi^2\rf
    \]
    by \eqref{eq.ci}.
    Hence $(c_s,c_s+s)$ and $(c_s+s,c_s)$ are the positions in $\P_{a_m}$ we seek; by the construction of $\P_{a_m}$, every other pair in $\P_{a_m}$ has a different absolute difference of coordinates.
  \end{proof}

  Equipped with these lemmas, we are now ready to describe the set of P-positions in $m$-Modular Wythoff.

  \begin{theorem}
    The P-positions of $m$-Modular Wythoff form the set $\P_{a_m}$.
  \end{theorem}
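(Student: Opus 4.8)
The plan is to verify that $\mathcal{P}_{a_m}$ satisfies the two properties that characterize the P-positions of any finite combinatorial game: \emph{closure}, that no legal move sends a position of $\mathcal{P}_{a_m}$ to another position of $\mathcal{P}_{a_m}$, and \emph{absorption}, that from every position outside $\mathcal{P}_{a_m}$ there is a legal move into $\mathcal{P}_{a_m}$. Since the terminal position $(0,0)$ lies in $\mathcal{P}_{a_m}$, these two properties force $\mathcal{P}_{a_m}$ to be exactly the set of P-positions. Throughout I will use $\mathcal{P}_{a_m} \subseteq \mathcal{P}$, the bounds from Lemma~\ref{thm:W} (a position $(a,b) \in \mathcal{P}_{a_m}$ with $a \le b$ has $a < m$, $b < m\phi$, $b - a < m/\phi$), and the Beatty property that the lower and upper Wythoff sequences partition the positive integers.

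For closure, the moves of Wythoff's game form a subset of our moves and no Wythoff move joins two elements of $\mathcal{P}$, so the only moves to exclude are \emph{genuine} Type~II moves removing $k_1 \ne k_2$ tokens with $k_1 \equiv k_2 \pmod m$. Such a move from $(q_1,q_2)$ to $(s_1,s_2)$ changes the coordinate difference by $k_1 - k_2 = (q_1 - q_2) - (s_1 - s_2)$, a nonzero multiple of $m$. By Lemma~\ref{thm:W} each of $|q_1 - q_2|$ and $|s_1 - s_2|$ is less than $m/\phi$, so the change has absolute value less than $2m/\phi < 2m$ and must equal $\pm m$; by symmetry assume $+m$. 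Writing the two differences as $p > 0$ and $-e < 0$ with $p + e = m$, the bound $p < m/\phi$ yields $e > m/\phi^2$, hence $p\phi < e\phi^2$ and $\lfloor p\phi\rfloor \le \lfloor e\phi^2\rfloor$. As $\lfloor p\phi\rfloor$ is a lower Wythoff number and $\lfloor e\phi^2\rfloor$ an upper one, they are distinct, so the source's second coordinate is strictly below the target's; then the move cannot remove a positive number of tokens from the second pile, a contradiction.

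For the absorption step — which I expect to be the crux — fix $(x,y) \notin \mathcal{P}_{a_m}$ and assume $x \ge y$ by the symmetry of the game and of $\mathcal{P}_{a_m}$; set $\delta = x - y$. The governing observation is that the coordinate differences realized in $\mathcal{P}_{a_m}$ are exactly $\{-a_m, \dots, a_m\}$, and $2a_m + 1 > m$, so every residue modulo $m$ occurs as a difference; this lets me aim at the element of $\mathcal{P}_{a_m}$ whose difference $d$ satisfies $d \equiv \delta \pmod m$. I would split on the smaller coordinate $y$. If $y = 0$, empty the larger pile to reach $(0,0)$. If $1 \le y < m$, use the Beatty type of $y$: when $y$ is an upper Wythoff number a single Type~I move reaches the dominated P-position sharing the coordinate $y$; when $y = \lfloor k\phi\rfloor$ is a lower Wythoff number I reach the difference-$\delta$ target by an equal-removal move when $\delta < k$, and the position $(\lfloor k\phi^2\rfloor, \lfloor k\phi\rfloor)$ by a Type~I move when $\delta > k$. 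If $y \ge m$, an equal-removal move reaches the difference-$\delta$ target when $\delta \le a_m$; otherwise $x$ exceeds every coordinate appearing in $\mathcal{P}_{a_m}$, and a genuine Type~II move reaches the unique in-range target of difference $\equiv \delta \pmod m$, with domination in the shorter pile guaranteed either trivially (when $\delta \bmod m \le a_m$) or by the estimate $e\phi^2 < m \le y$ for $e = m - (\delta \bmod m)$ (when $\delta \bmod m > a_m$).

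The main obstacle is precisely this absorption step: because the target set is finite, every position — including arbitrarily large ones — must have a move into it, so the usual ``move toward a nearby P-position'' reasoning is unavailable and an explicit move must be produced in each regime. The delicate points are (a) ensuring the chosen target is coordinate-wise dominated by $(x,y)$, which forces the split on $y$ and on $\delta$, and (b) the floor estimates tying $a_m = \lfloor m/\phi\rfloor$ to the Beatty data, notably $e > m/\phi^2$ (for closure) and $e\phi^2 < m$ (for absorption), both arising when $\delta \bmod m$ straddles $a_m$. Once these size estimates are secured, each case reduces to checking that the prescribed removals are positive and, for Type~II moves, congruent modulo $m$, which is routine.
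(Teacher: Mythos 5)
Your proposal is correct and follows essentially the same route as the paper: a closure step plus an absorption step, with the same reduction of genuine Type~II moves to an offset of exactly $\pm m$ via the bounds of Lemma~\ref{thm:W}, and the same case split on whether the smaller coordinate is below $m$ (then by its Beatty type and the size of the difference) or at least $m$ (then by the residue of the difference modulo $m$). The only cosmetic difference is in finishing the closure step: the paper computes the landing position explicitly and sums two inequalities to reach the contradiction $m<m$, whereas you compare $\lfloor p\phi\rfloor$ with $\lfloor e\phi^2\rfloor$ to show the second pile would have to gain tokens; both hinge on the same estimate $b-a<m/\phi$.
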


  \begin{proof}
    We know that the terminal position $(0,0)\in\P_{a_m}$, so it suffices to show that \textbf{there are no moves between any two positions in $\P_{a_m}$} and that \textbf{from any position not in $\P_{a_m}$, there exists a move to a position in $\P_{a_m}$}.

    \textbf{First, we show that there are no moves between any two positions in $\P_{a_m}$.}
    As $\P_{a_m}$ is a subset of the set $\P$ of P-positions in Wythoff's Game, there can be no Wythoff moves between its elements: there can be no Type I moves between them, and there can be no Type II moves between them in which the same number of tokens is removed from either pile.
    It remains to verify that no other Type II moves are possible between elements of $\P_{a_m}$.

    Let $(p_1,p_2)$ be a non-terminal position in $\P_{a_m}$ with $p_1<p_2$ (the case of $p_1>p_2$ is analogous by symmetry), and assume to the contrary that there exists a Type II move from $(p_1,p_2)$ to another position in $\P_{a_m}$ that removes a different number of tokens from each pile.
    That is, there exist positive integers $k_1,k_2$ with $k_1\equiv k_2\pmod{m}$ but $k_1\neq k_2$ such that $(p_1-k_1,p_2-k_2)\in\P_{a_m}$ as well.
    In particular, both piles must have a nonnegative number of tokens, so $k_1\leq p_1$ and $k_2\leq p_2$.
    Then by \eqref{eq.p1-m}, $p_1<m$, so $k_1<m$; and by \eqref{eq.p2-mphi}, $p_2<m\phi$, so $k_2<m\phi<2m$.
    Hence the only way for $k_2-k_1$ to be a nonzero multiple of $m$ would be for $k_2=k_1+m$.

    So the Type II move in question must lead to $(p_1-k_1,p_2-k_1-m)\in\P_{a_m}$.
    By \eqref{eq.p2p1-m-over-phi}, we have $p_2-p_1<m/\phi=m(\phi-1)$, which in turn is less than $m$; hence
    \[
    (p_1-k_1)-(p_2-k_1-m)=m-(p_2-p_1)>0,
    \]
    so $p_1-k_1$ is the larger of the two piles and $m-(p_2-p_1)$ is the absolute difference between them.
    Then by \Cref{lem.diff-exists}, we have that $p_1-k_1=\lf(m-(p_2-p_1))\phi^2\rf$, so $(m-(p_2-p_1))\phi^2 < p_1$.
    Since
    \[
    1/\phi^2=(\phi-1)^2=\phi^2-2\phi+1=\phi+1-2\phi+1=2-\phi,
    \]
    it follows that
    \[
    m-(p_2-p_1) < p_1/\phi^2 = p_1(2-\phi) < m(2-\phi).
    \]
    Yet we still have $p_2-p_1 < m(\phi-1)$.
    Adding these inequalities yields $m<m$, a contradiction.
    Therefore no moves are possible between elements of $\P_{a_m}$.

    \textbf{Next, we show that from any position not in $\P_{a_m}$, there exists a move to a position in $\P_{a_m}$.}
    Let $(q_1, q_2)$ be a position not in $\P_{a_m}$ with $q_1\leq q_2$ (again, the case of $q_1\geq q_2$ is analogous by symmetry).
    We wish to show that there exists a move from $(q_1,q_2)$ to a position in $\P_{a_m}$.
    There are two cases to consider: either $q_1 < m$ or $q_1 \geq m$.

    \begin{enumerate}[label=\textbf{Case \arabic*:}]
      \item Say $q_1 < m$.
      If $q_1=0$, then for $(q_1, q_2)\notin\P_{a_m}$ to hold we must have $q_2\neq0$.
      Hence there is a Type I move from $(q_1,q_2)=(0,q_2)$ to $(0,0)\in\P_{a_m}$, and we are done.

      So assume instead $q_1>0$.
      Then by \Cref{lem.coord-exists}, there exists a position in $\P_{a_m}$ with $q_1$ as the size of one of the piles: either the larger pile or the smaller pile.
      If $q_1$ is the larger pile size of a position in $\P_{a_m}$, then there exists $q_2'<q_1\leq q_2$ with $(q_1,q_2')\in\P_{a_m}$.
      Hence there is a Type I move from $(q_1, q_2)$ to $(q_1,q_2')\in\P_{a_m}$, and we are again done.

      Otherwise, $q_1$ is the smaller pile size of a position in $\P_{a_m}$, so it must be a lower Wythoff number: $q_1 = \lf i\phi\rf$ for some positive integer $i\leq a_m$, with $(q_1,q_1+i)\in\P_{a_m}$.
      We have two subcases: either $q_2>q_1\phi$ or $q_2\leq q_1\phi$.

      If $q_2 > q_1\phi = \lf i\phi\rf\phi$, then since $\phi=1+1/\phi$, we have
      \[
      q_2 > \lf i\phi\rf\phi = \lf i\phi\rf + \lf i\phi\rf/\phi > \lf i\phi\rf+i-1,
      \]
      where the latter inequality follows from the fact that $i\phi-(i-1)\phi=\phi>1$ and thus $\lf i\phi\rf > (i-1)\phi$.
      It follows that $q_2\geq\lf i\phi\rf+i=q_1+i$.
      Yet $(q_1,q_1+i)$ is in $\P_{a_m}$ while $(q_1,q_2)$ is not; so in fact $q_2>q_1+i$, and there is a Type I move from $(q_1,q_2)$ to $(q_1,q_1+i)\in\P_{a_m}$, as desired.

      On the other hand, if $q_2 \leq q_1 \phi$, then
      \[
      q_2 - q_1 \leq q_1\phi - q_1 = q_1/\phi = \lfloor i\phi \rfloor/\phi < i \leq a_m.
      \]
      So either $q_2-q_1=0$, in which case there is a Type II move from $(q_1,q_2)$ to $(0,0)\in\P_{a_m}$, and we are done; or $q_2-q_1>0$, in which case \Cref{lem.diff-exists} implies that $(\lf(q_2 - q_1)\phi\rf, \lf(q_2 - q_1)\phi\rf+q_2-q_1)\in\P_{a_m}$.
      Then from above we have $i>q_2-q_1$, so $q_1=\lf i\phi\rf>\lf(q_2-q_1)\phi\rf$, and therefore we have a Type II move from $(q_1,q_2)$ to $(\lf(q_2 - q_1)\phi\rf, \lf(q_2 - q_1)\phi\rf+q_2-q_1)\in\P_{a_m}$ given by removing the same number of tokens from each pile, as desired.

      \item Say instead $q_1 \geq m$.
      By the division algorithm, there exist nonnegative integers $r$ and $x$ satisfying $q_2-q_1=mx+r$ and $0\leq r<m$.
      We have three subcases: it could be that $r=0$; it could be that $0<r<m/\phi$; or it could be that $m/\phi<r<m$.

      If $r=0$, then $m$ divides $q_2-q_1$, so removing $q_1$ from one pile and $q_2$ from the other is a Type II move sending $(q_1,q_2)$ to $(0,0)\in\P_{a_m}$, as desired.

      If on the other hand $0<r<m/\phi$, then by \Cref{lem.diff-exists} there exists a position $(s_1,s_2)\in\P_{a_m}$ for which $s_2-s_1=r$, namely when $s_1=\lf r\phi\rf$.
      We have
      \[
      s_1=\lf r\phi\rf<\lf (m/\phi)\phi\rf=m\leq q_1
      \]
      and thus
      \[
      s_2=s_1+r<q_1+r=q_2-mx\leq q_2,
      \]
      so $(q_1,q_2)\succ(s_1,s_2)$.
      As $s_2-s_1=r\equiv q_2-q_1\pmod{m}$, it follows from \Cref{lem.move} that there is a move from $(q_1,q_2)$ to $(s_1,s_2)\in\P_{a_m}$, as desired.

      Otherwise, $m/\phi<r<m$.
      Since $q_2-q_1\equiv r\pmod{m}$, we have that $q_1-q_2\equiv m-r\pmod{m}$.
      Note that $\phi<2$, so $1<2/\phi$ and thus $1-1/\phi<1/\phi$.
      It follows that
      \[
      0<m-r<m-m/\phi<m/\phi,
      \]
      so by \Cref{lem.diff-exists}, there exists a position $(s_2,s_1)\in\P_{a_m}$ for which $s_2-s_1=m-r$ and thus $q_1 - q_2 \equiv s_2 - s_1 \pmod{m}$; in particular, $s_2=\lf(m-r)\phi^2\rf$.
      Then
      \[
      s_1<s_2 = \lf(m-r)\phi^2\rf < \lf(m-m/\phi)\phi^2\rf = \lf m(\phi^2-\phi)\rf = m \leq q_1 \leq q_2,
      \]
      so $(q_1, q_2)\succ(s_2, s_1)$.
      Hence, by \Cref{lem.move}, there is a move from $(q_1, q_2)$ to $(s_1, s_2)$, and we are done. \qedhere
    \end{enumerate}
  \end{proof}

  \begin{corollary}
    The number of P-positions of $m$-Modular Wythoff is finite; in particular, it is equal to $2\lf m/\phi\rf + 1$.
  \end{corollary}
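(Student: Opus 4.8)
The plan is to derive this as an immediate consequence of the preceding Theorem together with a direct count of the size of $\mathcal{P}_{a_m}$. First I would invoke the Theorem, which identifies the set of P-positions of $m$-Modular Wythoff exactly with $\mathcal{P}_{a_m}$. Since $a_m = \lfloor m/\phi \rfloor$ is a fixed nonnegative integer and $\mathcal{P}_{a_m}$ is built from the single-element set $\mathcal{P}_0 = \{(0,0)\}$ by finitely many stages, finiteness is then automatic; all that remains is to pin down the exact cardinality.

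The core of the argument is a one-line induction on $i$ establishing that $|\mathcal{P}_i| = 2i + 1$. The base case is $|\mathcal{P}_0| = 1$. For the inductive step I would use the recursive definition $\mathcal{P}_i = \mathcal{P}_{i-1} \cup \{(a, a+i), (a+i, a)\}$ and check that exactly two genuinely new ordered pairs are adjoined at each stage: both pairs are new because $a$ is chosen to be the smallest positive integer not already appearing as a coordinate in any pair of $\mathcal{P}_{i-1}$, and the two pairs are distinct from each other because $i \geq 1$ forces $a \neq a+i$. Hence $|\mathcal{P}_i| = |\mathcal{P}_{i-1}| + 2 = 2(i-1)+1+2 = 2i+1$.

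Finally I would substitute $i = a_m$. Using $a_m = \lfloor m/\phi \rfloor$, this yields $|\mathcal{P}_{a_m}| = 2a_m + 1 = 2\lfloor m/\phi \rfloor + 1$, which is the claimed count and, being a finite integer, also confirms finiteness.

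I do not expect a genuine obstacle here, as the statement is essentially a bookkeeping corollary of the Theorem; the only point requiring a moment of care is verifying in the inductive step that the two adjoined pairs are both new and mutually distinct, which is exactly what the freshness of $a$ and the hypothesis $i \geq 1$ guarantee.
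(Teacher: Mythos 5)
Your proposal is correct and follows essentially the same route as the paper: invoke the Theorem to identify the P-positions with $\mathcal{P}_{a_m}$ and then count $|\mathcal{P}_{a_m}| = 2a_m + 1 = 2\lfloor m/\phi \rfloor + 1$. The paper simply asserts this cardinality ``by definition,'' whereas you spell out the short induction verifying that each stage adds exactly two new distinct pairs; that is a harmless elaboration, not a different argument.
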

  \begin{proof}
    By construction, $\P_{a_m}$ contains $2a_m + 1 = 2\lf m/\phi\rf + 1$ elements.
  \end{proof}

  \section{Further directions} \label{sec:fd}

  We conclude by suggesting some further directions of study.

  \subsection{Grundy numbers for \texorpdfstring{$m$}{m}-Modular Wythoff}

  The \textit{Grundy number} of a position in a combinatorial game indicates the size of the pile from the Game of Nim to which the position is equivalent.
  A position's Grundy number can be recursively computed as the minimal excluded nonnegative integer among the Grundy numbers of the positions to which the original position can go in a single move.
  In particular, every P-position has a Grundy number of $0$, while the N-positions have Grundy numbers that are positive.

  As of \cite{wyt-grun}, no closed-form formula for the Grundy numbers for Wythoff's Game are known.
  Nevertheless, we list a few recursively computed Grundy numbers for Wythoff's Game in \Cref{tbl:wyt-grun}; there, as in the tables that follow, the entry in the row labeled $p_1$ and the column labeled $p_2$ indicates the Grundy number of the position $(p_1,p_2)$.

  The other tables in the appendix list some of the Grundy numbers for our $m$-Modular Wythoff in the cases of $2\leq m\leq9$.
  We make a few initial observations:
  \begin{itemize}
    \item Grundy numbers for $m$-Modular Wythoff coincide with the Grundy numbers for the usual Wythoff's Game when the pile sizes are at most $m$, as in those cases the moves available for the two games are identical.
    \item There \textit{is} a clear pattern in the Grundy numbers for 3-Modular Wythoff: the Grundy number of position $(3i+r,3j+s)$, where $i,j,r,s$ are nonnegative integers with $r,s\in\{0,1,2\}$, is equal to $3(i+j)+t$ for $t\in\{0,1,2\}$ satisfying $r+s\equiv t\pmod{3}$.
    This can be proven by induction on $i$ and $j$.
    The fact that the case of $m=3$, along with of course the trivial case of $m=1$, has such a concise characterization of its Grundy numbers is likely to do with the fact that $m$-Modular Wythoff has exactly $m$ P-positions precisely when $m=3$ or $m=1$.
  \end{itemize}
  But ultimately, just like with Wythoff's Game, we do not have a closed-form formula for the Grundy numbers of $m$-Modular Wythoff for general values of $m$ (aside from $1$ and $3$).
  Nor do we know precisely how the Grundy numbers for $m$-Modular Wythoff relate to the Grundy numbers for Wythoff's Game for larger pile sizes.
  Further investigation is needed to compute Grundy numbers for $m$-Modular Wythoff, to see if known results for the Grundy numbers of Wythoff's Game such as those given in \cite{wyt-grun} have analogs in $m$-Modular variants, and perhaps eventually use the properties of the Grundy numbers for $m$-Modular Wythoff to uncover further properties of the Grundy numbers for standard Wythoff.

  \subsection{Extensions with finite P-positions}

  Wythoff's Game has an infinite set of P-positions; and yet we showed that by expanding the possible moves of the game in a certain way, we form an extension of Wythoff's Game, namely $m$-Modular Wythoff, that has a finite set of P-positions---indeed, a subset of the P-positions of the original Wythoff's Game.
  This raises the following questions, among others:
  \begin{itemize}
    \item Can we characterize the additional moves necessary and/or sufficient to extend Wythoff's Game so that the extension has a finite set of P-positions?
    \item Can we characterize the additional moves necessary and/or sufficient to extend Wythoff's Game so that the extension has a finite subset of the P-positions of the original Wythoff's Game?
    \item Can we characterize the additional moves necessary and/or sufficient to extend another invariant game so that the extension has a finite set of P-positions, particularly a finite subset of the P-positions of the original game?
  \end{itemize}

  The latter two questions, generalizing to other invariant combinatorial games beyond Wythoff's Game, lead to the final avenue of further study we shall mention.

  \subsection{Modular variants of other games}

  While a modular extension of Nim was studied in \cite{KS} and a modular extension of Wythoff was studied here, modular extensions of other combinatorial games have not been studied in any level of generalization.
  We are interested in whether modular variants of other games relate to the original games much like how the modular variants of Nim or Wythoff relate to standard Nim or Wythoff.

  \printbibliography

  \section*{Appendix: Tables of Grundy numbers}

  In the following tables, we list the Grundy numbers for Wythoff's Game, as well as $m$-Modular Wythoff's Game for $2\leq m\leq9$, for piles of size at most $15$.

  \begin{table}[h!]
    \scriptsize
    \caption{Grundy numbers for Wythoff's Game.}
    \label{tbl:wyt-grun}
    \begin{center}
      \begin{tabular}{c|cccccccccccccccc}
        & 0 & 1 & 2 & 3 & 4 & 5 & 6 & 7 & 8 & 9 & 10 & 11 & 12 & 13 & 14 & 15 \\
        \hline
        0 & 0 & 1 & 2 & 3 & 4 & 5 & 6 & 7 & 8 & 9 & 10 & 11 & 12 & 13 & 14 & 15 \\
        1 & 1 & 2 & 0 & 4 & 5 & 3 & 7 & 8 & 6 & 10 & 11 & 9 & 13 & 14 & 12 & 16 \\
        2 & 2 & 0 & 1 & 5 & 3 & 4 & 8 & 6 & 7 & 11 & 9 & 10 & 14 & 12 & 13 & 17 \\
        3 & 3 & 4 & 5 & 6 & 2 & 0 & 1 & 9 & 10 & 12 & 8 & 7 & 15 & 11 & 16 & 18 \\
        4 & 4 & 5 & 3 & 2 & 7 & 6 & 9 & 0 & 1 & 8 & 13 & 12 & 11 & 16 & 15 & 10 \\
        5 & 5 & 3 & 4 & 0 & 6 & 8 & 10 & 1 & 2 & 7 & 12 & 14 & 9 & 15 & 17 & 13 \\
        6 & 6 & 7 & 8 & 1 & 9 & 10 & 3 & 4 & 5 & 13 & 0 & 2 & 16 & 17 & 18 & 12 \\
        7 & 7 & 8 & 6 & 9 & 0 & 1 & 4 & 5 & 3 & 14 & 15 & 13 & 17 & 2 & 10 & 19 \\
        8 & 8 & 6 & 7 & 10 & 1 & 2 & 5 & 3 & 4 & 15 & 16 & 17 & 18 & 0 & 9 & 14 \\
        9 & 9 & 10 & 11 & 12 & 8 & 7 & 13 & 14 & 15 & 16 & 17 & 6 & 19 & 5 & 1 & 0 \\
        10 & 10 & 11 & 9 & 8 & 13 & 12 & 0 & 15 & 16 & 17 & 14 & 18 & 7 & 6 & 2 & 3 \\
        11 & 11 & 9 & 10 & 7 & 12 & 14 & 2 & 13 & 17 & 6 & 18 & 15 & 8 & 19 & 20 & 21 \\
        12 & 12 & 13 & 14 & 15 & 11 & 9 & 16 & 17 & 18 & 19 & 7 & 8 & 10 & 20 & 21 & 22 \\
        13 & 13 & 14 & 12 & 11 & 16 & 15 & 17 & 2 & 0 & 5 & 6 & 19 & 20 & 9 & 7 & 8 \\
        14 & 14 & 12 & 13 & 16 & 15 & 17 & 18 & 10 & 9 & 1 & 2 & 20 & 21 & 7 & 11 & 23 \\
        15 & 15 & 16 & 17 & 18 & 10 & 13 & 12 & 19 & 14 & 0 & 3 & 21 & 22 & 8 & 23 & 20
      \end{tabular}
    \end{center}
  \end{table}

  \begin{table}[h!]
    \scriptsize
    \caption{Grundy numbers for 2-Modular Wythoff's Game.}
    \begin{center}
      \begin{tabular}{c|cccccccccccccccc}
        & 0 & 1 & 2 & 3 & 4 & 5 & 6 & 7 & 8 & 9 & 10 & 11 & 12 & 13 & 14 & 15 \\
        \hline
        0 & 0 & 1 & 2 & 3 & 4 & 5 & 6 & 7 & 8 & 9 & 10 & 11 & 12 & 13 & 14 & 15 \\
        1 & 1 & 2 & 0 & 4 & 5 & 3 & 7 & 8 & 6 & 10 & 11 & 9 & 13 & 14 & 12 & 16 \\
        2 & 2 & 0 & 1 & 5 & 3 & 4 & 8 & 6 & 7 & 11 & 9 & 10 & 14 & 12 & 13 & 17 \\
        3 & 3 & 4 & 5 & 6 & 2 & 7 & 9 & 10 & 11 & 12 & 8 & 13 & 15 & 16 & 17 & 18 \\
        4 & 4 & 5 & 3 & 2 & 7 & 6 & 10 & 11 & 9 & 8 & 13 & 12 & 16 & 17 & 15 & 14 \\
        5 & 5 & 3 & 4 & 7 & 6 & 8 & 11 & 9 & 10 & 13 & 12 & 14 & 17 & 15 & 16 & 19 \\
        6 & 6 & 7 & 8 & 9 & 10 & 11 & 5 & 12 & 13 & 14 & 15 & 16 & 18 & 19 & 20 & 21 \\
        7 & 7 & 8 & 6 & 10 & 11 & 9 & 12 & 13 & 14 & 15 & 16 & 17 & 19 & 20 & 18 & 22 \\
        8 & 8 & 6 & 7 & 11 & 9 & 10 & 13 & 14 & 12 & 16 & 17 & 15 & 20 & 18 & 19 & 23 \\
        9 & 9 & 10 & 11 & 12 & 8 & 13 & 14 & 15 & 16 & 17 & 18 & 19 & 21 & 22 & 23 & 24 \\
        10 & 10 & 11 & 9 & 8 & 13 & 12 & 15 & 16 & 17 & 18 & 14 & 20 & 22 & 23 & 21 & 25 \\
        11 & 11 & 9 & 10 & 13 & 12 & 14 & 16 & 17 & 15 & 19 & 20 & 18 & 23 & 21 & 22 & 26 \\
        12 & 12 & 13 & 14 & 15 & 16 & 17 & 18 & 19 & 20 & 21 & 22 & 23 & 11 & 24 & 25 & 27 \\
        13 & 13 & 14 & 12 & 16 & 17 & 15 & 19 & 20 & 18 & 22 & 23 & 21 & 24 & 25 & 26 & 28 \\
        14 & 14 & 12 & 13 & 17 & 15 & 16 & 20 & 18 & 19 & 23 & 21 & 22 & 25 & 26 & 24 & 29 \\
        15 & 15 & 16 & 17 & 18 & 14 & 19 & 21 & 22 & 23 & 24 & 25 & 26 & 27 & 28 & 29 & 30
      \end{tabular}
    \end{center}
  \end{table}

  \begin{table}[h!]
    \scriptsize
    \caption{Grundy numbers for 3-Modular Wythoff's Game.}
    \begin{center}
      \begin{tabular}{c|cccccccccccccccc}
        & 0 & 1 & 2 & 3 & 4 & 5 & 6 & 7 & 8 & 9 & 10 & 11 & 12 & 13 & 14 & 15 \\
        \hline
        0 & 0 & 1 & 2 & 3 & 4 & 5 & 6 & 7 & 8 & 9 & 10 & 11 & 12 & 13 & 14 & 15 \\
        1 & 1 & 2 & 0 & 4 & 5 & 3 & 7 & 8 & 6 & 10 & 11 & 9 & 13 & 14 & 12 & 16 \\
        2 & 2 & 0 & 1 & 5 & 3 & 4 & 8 & 6 & 7 & 11 & 9 & 10 & 14 & 12 & 13 & 17 \\
        3 & 3 & 4 & 5 & 6 & 7 & 8 & 9 & 10 & 11 & 12 & 13 & 14 & 15 & 16 & 17 & 18 \\
        4 & 4 & 5 & 3 & 7 & 8 & 6 & 10 & 11 & 9 & 13 & 14 & 12 & 16 & 17 & 15 & 19 \\
        5 & 5 & 3 & 4 & 8 & 6 & 7 & 11 & 9 & 10 & 14 & 12 & 13 & 17 & 15 & 16 & 20 \\
        6 & 6 & 7 & 8 & 9 & 10 & 11 & 12 & 13 & 14 & 15 & 16 & 17 & 18 & 19 & 20 & 21 \\
        7 & 7 & 8 & 6 & 10 & 11 & 9 & 13 & 14 & 12 & 16 & 17 & 15 & 19 & 20 & 18 & 22 \\
        8 & 8 & 6 & 7 & 11 & 9 & 10 & 14 & 12 & 13 & 17 & 15 & 16 & 20 & 18 & 19 & 23 \\
        9 & 9 & 10 & 11 & 12 & 13 & 14 & 15 & 16 & 17 & 18 & 19 & 20 & 21 & 22 & 23 & 24 \\
        10 & 10 & 11 & 9 & 13 & 14 & 12 & 16 & 17 & 15 & 19 & 20 & 18 & 22 & 23 & 21 & 25 \\
        11 & 11 & 9 & 10 & 14 & 12 & 13 & 17 & 15 & 16 & 20 & 18 & 19 & 23 & 21 & 22 & 26 \\
        12 & 12 & 13 & 14 & 15 & 16 & 17 & 18 & 19 & 20 & 21 & 22 & 23 & 24 & 25 & 26 & 27 \\
        13 & 13 & 14 & 12 & 16 & 17 & 15 & 19 & 20 & 18 & 22 & 23 & 21 & 25 & 26 & 24 & 28 \\
        14 & 14 & 12 & 13 & 17 & 15 & 16 & 20 & 18 & 19 & 23 & 21 & 22 & 26 & 24 & 25 & 29 \\
        15 & 15 & 16 & 17 & 18 & 19 & 20 & 21 & 22 & 23 & 24 & 25 & 26 & 27 & 28 & 29 & 30
      \end{tabular}
    \end{center}
  \end{table}

  \begin{table}[h!]
    \scriptsize
    \caption{Grundy numbers for 4-Modular Wythoff's Game.}
    \begin{center}
      \begin{tabular}{c|cccccccccccccccc}
        & 0 & 1 & 2 & 3 & 4 & 5 & 6 & 7 & 8 & 9 & 10 & 11 & 12 & 13 & 14 & 15 \\
        \hline
        0 & 0 & 1 & 2 & 3 & 4 & 5 & 6 & 7 & 8 & 9 & 10 & 11 & 12 & 13 & 14 & 15 \\
        1 & 1 & 2 & 0 & 4 & 5 & 3 & 7 & 8 & 6 & 10 & 11 & 9 & 13 & 14 & 12 & 16 \\
        2 & 2 & 0 & 1 & 5 & 3 & 4 & 8 & 6 & 7 & 11 & 9 & 10 & 14 & 12 & 13 & 17 \\
        3 & 3 & 4 & 5 & 6 & 2 & 0 & 9 & 10 & 11 & 1 & 8 & 7 & 15 & 16 & 17 & 18 \\
        4 & 4 & 5 & 3 & 2 & 7 & 6 & 1 & 11 & 9 & 8 & 12 & 13 & 16 & 17 & 15 & 10 \\
        5 & 5 & 3 & 4 & 0 & 6 & 8 & 10 & 9 & 12 & 13 & 14 & 15 & 17 & 11 & 16 & 19 \\
        6 & 6 & 7 & 8 & 9 & 1 & 10 & 5 & 12 & 13 & 14 & 15 & 16 & 11 & 18 & 19 & 20 \\
        7 & 7 & 8 & 6 & 10 & 11 & 9 & 12 & 13 & 14 & 5 & 16 & 17 & 18 & 19 & 20 & 21 \\
        8 & 8 & 6 & 7 & 11 & 9 & 12 & 13 & 14 & 15 & 16 & 17 & 18 & 19 & 20 & 21 & 22 \\
        9 & 9 & 10 & 11 & 1 & 8 & 13 & 14 & 5 & 16 & 12 & 18 & 19 & 20 & 21 & 22 & 23 \\
        10 & 10 & 11 & 9 & 8 & 12 & 14 & 15 & 16 & 17 & 18 & 19 & 20 & 21 & 22 & 23 & 24 \\
        11 & 11 & 9 & 10 & 7 & 13 & 15 & 16 & 17 & 18 & 19 & 20 & 14 & 22 & 23 & 24 & 25 \\
        12 & 12 & 13 & 14 & 15 & 16 & 17 & 11 & 18 & 19 & 20 & 21 & 22 & 23 & 24 & 25 & 26 \\
        13 & 13 & 14 & 12 & 16 & 17 & 11 & 18 & 19 & 20 & 21 & 22 & 23 & 24 & 25 & 26 & 27 \\
        14 & 14 & 12 & 13 & 17 & 15 & 16 & 19 & 20 & 21 & 22 & 23 & 24 & 25 & 26 & 18 & 28 \\
        15 & 15 & 16 & 17 & 18 & 10 & 19 & 20 & 21 & 22 & 23 & 24 & 25 & 26 & 27 & 28 & 29
      \end{tabular}
    \end{center}
  \end{table}

  \begin{table}[h!]
    \scriptsize
    \caption{Grundy numbers for 5-Modular Wythoff's Game.}
    \begin{center}
      \begin{tabular}{c|cccccccccccccccc}
        & 0 & 1 & 2 & 3 & 4 & 5 & 6 & 7 & 8 & 9 & 10 & 11 & 12 & 13 & 14 & 15 \\
        \hline
        0 & 0 & 1 & 2 & 3 & 4 & 5 & 6 & 7 & 8 & 9 & 10 & 11 & 12 & 13 & 14 & 15 \\
        1 & 1 & 2 & 0 & 4 & 5 & 3 & 7 & 8 & 6 & 10 & 11 & 9 & 13 & 14 & 12 & 16 \\
        2 & 2 & 0 & 1 & 5 & 3 & 4 & 8 & 6 & 7 & 11 & 9 & 10 & 14 & 12 & 13 & 17 \\
        3 & 3 & 4 & 5 & 6 & 2 & 0 & 1 & 9 & 10 & 12 & 8 & 7 & 15 & 11 & 16 & 18 \\
        4 & 4 & 5 & 3 & 2 & 7 & 6 & 9 & 0 & 11 & 8 & 13 & 1 & 16 & 17 & 15 & 10 \\
        5 & 5 & 3 & 4 & 0 & 6 & 8 & 10 & 1 & 9 & 7 & 12 & 14 & 17 & 15 & 18 & 13 \\
        6 & 6 & 7 & 8 & 1 & 9 & 10 & 3 & 11 & 12 & 13 & 14 & 4 & 18 & 16 & 17 & 19 \\
        7 & 7 & 8 & 6 & 9 & 0 & 1 & 11 & 4 & 13 & 14 & 15 & 12 & 19 & 20 & 10 & 21 \\
        8 & 8 & 6 & 7 & 10 & 11 & 9 & 12 & 13 & 14 & 15 & 16 & 17 & 20 & 18 & 19 & 22 \\
        9 & 9 & 10 & 11 & 12 & 8 & 7 & 13 & 14 & 15 & 16 & 17 & 18 & 21 & 19 & 20 & 23 \\
        10 & 10 & 11 & 9 & 8 & 13 & 12 & 14 & 15 & 16 & 17 & 18 & 19 & 22 & 23 & 21 & 24 \\
        11 & 11 & 9 & 10 & 7 & 1 & 14 & 4 & 12 & 17 & 18 & 19 & 13 & 23 & 21 & 22 & 25 \\
        12 & 12 & 13 & 14 & 15 & 16 & 17 & 18 & 19 & 20 & 21 & 22 & 23 & 11 & 24 & 25 & 26 \\
        13 & 13 & 14 & 12 & 11 & 17 & 15 & 16 & 20 & 18 & 19 & 23 & 21 & 24 & 22 & 26 & 27 \\
        14 & 14 & 12 & 13 & 16 & 15 & 18 & 17 & 10 & 19 & 20 & 21 & 22 & 25 & 26 & 23 & 28 \\
        15 & 15 & 16 & 17 & 18 & 10 & 13 & 19 & 21 & 22 & 23 & 24 & 25 & 26 & 27 & 28 & 29
      \end{tabular}
    \end{center}
  \end{table}

  \begin{table}[h!]
    \scriptsize
    \caption{Grundy numbers for 6-Modular Wythoff's Game.}
    \begin{center}
      \begin{tabular}{c|cccccccccccccccc}
        & 0 & 1 & 2 & 3 & 4 & 5 & 6 & 7 & 8 & 9 & 10 & 11 & 12 & 13 & 14 & 15 \\
        \hline
        0 & 0 & 1 & 2 & 3 & 4 & 5 & 6 & 7 & 8 & 9 & 10 & 11 & 12 & 13 & 14 & 15 \\
        1 & 1 & 2 & 0 & 4 & 5 & 3 & 7 & 8 & 6 & 10 & 11 & 9 & 13 & 14 & 12 & 16 \\
        2 & 2 & 0 & 1 & 5 & 3 & 4 & 8 & 6 & 7 & 11 & 9 & 10 & 14 & 12 & 13 & 17 \\
        3 & 3 & 4 & 5 & 6 & 2 & 0 & 1 & 9 & 10 & 12 & 8 & 7 & 15 & 11 & 16 & 18 \\
        4 & 4 & 5 & 3 & 2 & 7 & 6 & 9 & 0 & 1 & 8 & 13 & 12 & 11 & 16 & 15 & 14 \\
        5 & 5 & 3 & 4 & 0 & 6 & 8 & 10 & 1 & 2 & 7 & 12 & 14 & 9 & 15 & 17 & 13 \\
        6 & 6 & 7 & 8 & 1 & 9 & 10 & 3 & 4 & 5 & 13 & 14 & 15 & 16 & 17 & 18 & 12 \\
        7 & 7 & 8 & 6 & 9 & 0 & 1 & 4 & 5 & 3 & 14 & 15 & 13 & 17 & 10 & 19 & 20 \\
        8 & 8 & 6 & 7 & 10 & 1 & 2 & 5 & 3 & 4 & 15 & 16 & 17 & 18 & 9 & 11 & 21 \\
        9 & 9 & 10 & 11 & 12 & 8 & 7 & 13 & 14 & 15 & 16 & 17 & 6 & 19 & 20 & 21 & 22 \\
        10 & 10 & 11 & 9 & 8 & 13 & 12 & 14 & 15 & 16 & 17 & 18 & 19 & 20 & 6 & 22 & 23 \\
        11 & 11 & 9 & 10 & 7 & 12 & 14 & 15 & 13 & 17 & 6 & 19 & 20 & 21 & 18 & 8 & 24 \\
        12 & 12 & 13 & 14 & 15 & 11 & 9 & 16 & 17 & 18 & 19 & 20 & 21 & 10 & 22 & 23 & 7 \\
        13 & 13 & 14 & 12 & 11 & 16 & 15 & 17 & 10 & 9 & 20 & 6 & 18 & 22 & 19 & 24 & 25 \\
        14 & 14 & 12 & 13 & 16 & 15 & 17 & 18 & 19 & 11 & 21 & 22 & 8 & 23 & 24 & 9 & 26 \\
        15 & 15 & 16 & 17 & 18 & 14 & 13 & 12 & 20 & 21 & 22 & 23 & 24 & 7 & 25 & 26 & 27
      \end{tabular}
    \end{center}
  \end{table}

  \begin{table}[h!]
    \scriptsize
    \caption{Grundy numbers for 7-Modular Wythoff's Game.}
    \begin{center}
      \begin{tabular}{c|cccccccccccccccc}
        & 0 & 1 & 2 & 3 & 4 & 5 & 6 & 7 & 8 & 9 & 10 & 11 & 12 & 13 & 14 & 15 \\
        \hline
        0 & 0 & 1 & 2 & 3 & 4 & 5 & 6 & 7 & 8 & 9 & 10 & 11 & 12 & 13 & 14 & 15 \\
        1 & 1 & 2 & 0 & 4 & 5 & 3 & 7 & 8 & 6 & 10 & 11 & 9 & 13 & 14 & 12 & 16 \\
        2 & 2 & 0 & 1 & 5 & 3 & 4 & 8 & 6 & 7 & 11 & 9 & 10 & 14 & 12 & 13 & 17 \\
        3 & 3 & 4 & 5 & 6 & 2 & 0 & 1 & 9 & 10 & 12 & 8 & 7 & 15 & 11 & 16 & 18 \\
        4 & 4 & 5 & 3 & 2 & 7 & 6 & 9 & 0 & 1 & 8 & 13 & 12 & 11 & 16 & 15 & 10 \\
        5 & 5 & 3 & 4 & 0 & 6 & 8 & 10 & 1 & 2 & 7 & 12 & 14 & 9 & 15 & 17 & 13 \\
        6 & 6 & 7 & 8 & 1 & 9 & 10 & 3 & 4 & 11 & 13 & 0 & 15 & 16 & 5 & 18 & 12 \\
        7 & 7 & 8 & 6 & 9 & 0 & 1 & 4 & 5 & 3 & 14 & 15 & 2 & 17 & 18 & 10 & 19 \\
        8 & 8 & 6 & 7 & 10 & 1 & 2 & 11 & 3 & 4 & 15 & 16 & 17 & 18 & 19 & 9 & 20 \\
        9 & 9 & 10 & 11 & 12 & 8 & 7 & 13 & 14 & 15 & 16 & 17 & 18 & 6 & 20 & 21 & 22 \\
        10 & 10 & 11 & 9 & 8 & 13 & 12 & 0 & 15 & 16 & 17 & 14 & 19 & 20 & 21 & 6 & 23 \\
        11 & 11 & 9 & 10 & 7 & 12 & 14 & 15 & 2 & 17 & 18 & 19 & 13 & 21 & 22 & 20 & 24 \\
        12 & 12 & 13 & 14 & 15 & 11 & 9 & 16 & 17 & 18 & 6 & 20 & 21 & 10 & 23 & 19 & 25 \\
        13 & 13 & 14 & 12 & 11 & 16 & 15 & 5 & 18 & 19 & 20 & 21 & 22 & 23 & 17 & 24 & 26 \\
        14 & 14 & 12 & 13 & 16 & 15 & 17 & 18 & 10 & 9 & 21 & 6 & 20 & 19 & 24 & 22 & 27 \\
        15 & 15 & 16 & 17 & 18 & 10 & 13 & 12 & 19 & 20 & 22 & 23 & 24 & 25 & 26 & 27 & 21
      \end{tabular}
    \end{center}
  \end{table}

  \begin{table}[h!]
    \scriptsize
    \caption{Grundy numbers for 8-Modular Wythoff's Game.}
    \begin{center}
      \begin{tabular}{c|cccccccccccccccc}
        & 0 & 1 & 2 & 3 & 4 & 5 & 6 & 7 & 8 & 9 & 10 & 11 & 12 & 13 & 14 & 15 \\
        \hline
        0 & 0 & 1 & 2 & 3 & 4 & 5 & 6 & 7 & 8 & 9 & 10 & 11 & 12 & 13 & 14 & 15 \\
        1 & 1 & 2 & 0 & 4 & 5 & 3 & 7 & 8 & 6 & 10 & 11 & 9 & 13 & 14 & 12 & 16 \\
        2 & 2 & 0 & 1 & 5 & 3 & 4 & 8 & 6 & 7 & 11 & 9 & 10 & 14 & 12 & 13 & 17 \\
        3 & 3 & 4 & 5 & 6 & 2 & 0 & 1 & 9 & 10 & 12 & 8 & 7 & 15 & 11 & 16 & 18 \\
        4 & 4 & 5 & 3 & 2 & 7 & 6 & 9 & 0 & 1 & 8 & 13 & 12 & 11 & 16 & 15 & 10 \\
        5 & 5 & 3 & 4 & 0 & 6 & 8 & 10 & 1 & 2 & 7 & 12 & 14 & 9 & 15 & 17 & 13 \\
        6 & 6 & 7 & 8 & 1 & 9 & 10 & 3 & 4 & 5 & 13 & 0 & 2 & 16 & 17 & 18 & 12 \\
        7 & 7 & 8 & 6 & 9 & 0 & 1 & 4 & 5 & 3 & 14 & 15 & 13 & 17 & 10 & 19 & 20 \\
        8 & 8 & 6 & 7 & 10 & 1 & 2 & 5 & 3 & 4 & 15 & 16 & 17 & 18 & 9 & 11 & 14 \\
        9 & 9 & 10 & 11 & 12 & 8 & 7 & 13 & 14 & 15 & 16 & 17 & 18 & 19 & 2 & 20 & 21 \\
        10 & 10 & 11 & 9 & 8 & 13 & 12 & 0 & 15 & 16 & 17 & 14 & 19 & 20 & 18 & 5 & 22 \\
        11 & 11 & 9 & 10 & 7 & 12 & 14 & 2 & 13 & 17 & 18 & 19 & 15 & 21 & 22 & 23 & 6 \\
        12 & 12 & 13 & 14 & 15 & 11 & 9 & 16 & 17 & 18 & 19 & 20 & 21 & 22 & 23 & 24 & 25 \\
        13 & 13 & 14 & 12 & 11 & 16 & 15 & 17 & 10 & 9 & 2 & 18 & 22 & 23 & 19 & 25 & 26 \\
        14 & 14 & 12 & 13 & 16 & 15 & 17 & 18 & 19 & 11 & 20 & 5 & 23 & 24 & 25 & 21 & 27 \\
        15 & 15 & 16 & 17 & 18 & 10 & 13 & 12 & 20 & 14 & 21 & 22 & 6 & 25 & 26 & 27 & 23
      \end{tabular}
    \end{center}
  \end{table}

  \begin{table}[h!]
    \scriptsize
    \caption{Grundy numbers for 9-Modular Wythoff's Game.}
    \begin{center}
      \begin{tabular}{c|cccccccccccccccc}
        & 0 & 1 & 2 & 3 & 4 & 5 & 6 & 7 & 8 & 9 & 10 & 11 & 12 & 13 & 14 & 15 \\
        \hline
        0 & 0 & 1 & 2 & 3 & 4 & 5 & 6 & 7 & 8 & 9 & 10 & 11 & 12 & 13 & 14 & 15 \\
        1 & 1 & 2 & 0 & 4 & 5 & 3 & 7 & 8 & 6 & 10 & 11 & 9 & 13 & 14 & 12 & 16 \\
        2 & 2 & 0 & 1 & 5 & 3 & 4 & 8 & 6 & 7 & 11 & 9 & 10 & 14 & 12 & 13 & 17 \\
        3 & 3 & 4 & 5 & 6 & 2 & 0 & 1 & 9 & 10 & 12 & 8 & 7 & 15 & 11 & 16 & 18 \\
        4 & 4 & 5 & 3 & 2 & 7 & 6 & 9 & 0 & 1 & 8 & 13 & 12 & 11 & 16 & 15 & 10 \\
        5 & 5 & 3 & 4 & 0 & 6 & 8 & 10 & 1 & 2 & 7 & 12 & 14 & 9 & 15 & 17 & 13 \\
        6 & 6 & 7 & 8 & 1 & 9 & 10 & 3 & 4 & 5 & 13 & 0 & 2 & 16 & 17 & 18 & 12 \\
        7 & 7 & 8 & 6 & 9 & 0 & 1 & 4 & 5 & 3 & 14 & 15 & 13 & 17 & 2 & 10 & 19 \\
        8 & 8 & 6 & 7 & 10 & 1 & 2 & 5 & 3 & 4 & 15 & 16 & 17 & 18 & 0 & 9 & 14 \\
        9 & 9 & 10 & 11 & 12 & 8 & 7 & 13 & 14 & 15 & 16 & 17 & 18 & 19 & 20 & 21 & 22 \\
        10 & 10 & 11 & 9 & 8 & 13 & 12 & 0 & 15 & 16 & 17 & 14 & 19 & 20 & 18 & 2 & 23 \\
        11 & 11 & 9 & 10 & 7 & 12 & 14 & 2 & 13 & 17 & 18 & 19 & 15 & 21 & 22 & 20 & 24 \\
        12 & 12 & 13 & 14 & 15 & 11 & 9 & 16 & 17 & 18 & 19 & 20 & 21 & 22 & 23 & 24 & 25 \\
        13 & 13 & 14 & 12 & 11 & 16 & 15 & 17 & 2 & 0 & 20 & 18 & 22 & 23 & 19 & 25 & 21 \\
        14 & 14 & 12 & 13 & 16 & 15 & 17 & 18 & 10 & 9 & 21 & 2 & 20 & 24 & 25 & 23 & 26 \\
        15 & 15 & 16 & 17 & 18 & 10 & 13 & 12 & 19 & 14 & 22 & 23 & 24 & 25 & 21 & 26 & 20
      \end{tabular}
    \end{center}
  \end{table}

\end{document}